\journalname{Submitted article}
\newcommand*{\id}{{\mathrm{id}}}
\newcommand*{\Ab}{{\mathbb A}}
\newcommand*{\Kb}{{\mathbb K}}
\newcommand*{\Db}{{\mathbb D}}
\newcommand*{\la}{{\langle}}
\newcommand*{\ra}{{\rangle}}
\newcommand*{\sh}{{\,\llcorner\!\llcorner\!\!\!\lrcorner\,}}
\newcommand*{\Ec}{{E}}
\newcommand*{\Gc}{{\mathrm g}}
\newcommand*{\zsf}{{\mathrm z}}
\newcommand*{\psf}{{\mathrm d}}
\newcommand*{\nsf}{{\mathrm n}}
\newcommand*{\Hb}{{\mathbb H}}
\newcommand*{\Sb}{{\mathbb S}}
\newcommand*{\conc}{{\mathrm{co}}}
\newcommand*{\shuf}{{\mathrm{sh}}}
\newcommand*{\shl}{{\text{sinhlog}}}
\newcommand*{\chl}{{\text{coshlog}}}
\newcommand*{\ind}{{\pi}}
\newcommand*{\un}{{\mathbf 1}}
\begin{document}

\title{Algebraic structure of stochastic expansions and efficient simulation}
\author{Kurusch Ebrahimi--Fard \and ~~~~~~~~~~~~~~~~~~~~~~ Alexander Lundervold
\and Simon J.A.~Malham \and Hans~Munthe--Kaas \and Anke Wiese}
\titlerunning{Algebraic structure of stochastic expansions}

\institute{Kurusch Ebrahimi--Fard \at 
Instituto de Ciencias Matem\'aticas,
Consejo Superior de Investigaciones Cient\'{i}ficas, 
C/ Nicol\'as Cabrera, no. 13-15, 28049 Madrid, Spain
\and
Alexander Lundervold \at 
Department of Mathematical Sciences,
Norwegian University of Science and Technology,
N-7491 Trondheim, Norway
\and
Simon J.A. Malham \and Anke Wiese \at
Maxwell Institute for Mathematical Sciences,
and School of Mathematical and Computer Sciences,
Heriot-Watt University, Edinburgh EH14 4AS, UK 
\and
Hans~Munthe--Kaas \at
Department of Mathematics,
University of Bergen,
Postbox 7800,
N-5020 Bergen, Norway}

\date{12th March 2012}
\voffset=10ex

\maketitle
\begin{abstract}
We investigate the algebraic structure underlying the 
stochastic Taylor solution expansion for stochastic differential systems. 
Our motivation is to construct \emph{efficient} integrators. 
These are approximations that generate strong numerical integration schemes 
that are more accurate than the corresponding stochastic Taylor approximation, 
independent of the governing vector fields and to all orders. 
The sinhlog integrator introduced by Malham \& Wiese (2009) is one example. 
Herein we: show that the natural context to study stochastic integrators 
and their properties is the convolution shuffle algebra of endomorphisms; 
establish a new whole class of efficient integrators; and then prove that, 
within this class, the sinhlog integrator generates the 
\emph{optimal} efficient stochastic integrator at all orders.
\keywords{stochastic simulation 
\and convolution shuffle algebra 
\and efficient integrators}
\subclass{60H10 \and 60H35}
\end{abstract}

\section{Introduction}\label{sec:intro}
We consider the simulation of stochastic differential systems
of arbitrary order $N\in\mathbb N$. We assume for $y_t\in\mathbb R^N$
our system has the form
\begin{equation*}
y_t=y_0+\sum_{i=0}^d\int_0^tV_i(y_\tau)\,\text{d}W_\tau^i.
\end{equation*}
This system is driven by a $d$-dimensional Wiener
process $(W^1,\ldots,W^d)$ and governed by a drift vector
field $V_0$ and diffusion vector fields $V_1,\ldots,V_d$. 
We use the convention $W^0_t\equiv t$ and interpret
the stochastic integrals in the Stratonovich sense.  
Hereafter we will assume $t\in\mathbb R_+$ lies in the interval
of existence of the solution. In general, we also suppose
that the vector fields $V_i\colon\mathbb R^N\to\mathbb R^N$ for
$i=0,\ldots,d$ are sufficiently smooth and non-commuting.
We focus on solution series and their use in strong 
simulation schemes. The stochastic Taylor expansion
for the flowmap $\varphi_t\colon y_0\mapsto y_t$, taking
the data $y_0$ at time $t=0$ to the solution $y_t$ at time $t$ for 
the stochastic differential system above, is given by
(see for example Baudoin 2004 or Lyons \& Victoir 2004)
\begin{equation*}
\varphi_t=\sum_{w} J_w(t)\, V_w.
\end{equation*}
Here $w=a_1\ldots a_n$ is a word with letters
$a_1,\ldots,a_n$ chosen from the alphabet $\Ab\coloneqq\{0,1,\ldots,d\}$.
The sum is over all possible words $w$ in $\Ab^*$, 
the free monoid on $\Ab$. All the stochastic information
is encoded in the scalar random variables 
(Stratonovich integrals)
\begin{equation*}
J_w(t)\coloneqq\int_0^t\cdots\int_0^{\tau_{n-1}}
\text{d}W^{a_1}_{\tau_n}\,\cdots\,\text{d}W^{a_n}_{\tau_1}.
\end{equation*}
The partial differential operators 
$V_w\coloneqq V_{a_1}\circ\cdots\circ V_{a_n}$, constructed
by composing the vector fields, encode all the geometric information.

Strong numerical integration schemes for stochastic differential
systems are based on truncating the stochastic Taylor expansion
and applying the resulting approximate flowmap over successive small
computation subintervals spanning the global time interval of interest. 
More generally, across a computation interval $[0,t]$, for any smooth
map $f\colon\text{Diff}(\mathbb R^N)\to\text{Diff}(\mathbb R^N)$,
we can:
\begin{enumerate}
\item Construct the series $\sigma_t=f(\varphi_t)$;
\item Truncate the series $\sigma_t$ to $\hat\sigma_t$ according to a 
grading $\Gc(w)$ on the words $w$;
\item Compute $\hat\varphi_t=f^{-1}(\hat\sigma_t)$ and use 
this as the basis of a numerical scheme. 
\end{enumerate}
For example, suppose $f=\id$, the identity map. 
Then $\sigma_t$ is just the stochastic Taylor expansion $\varphi_t$, 
which we split according to the grading 
$\Gc(w)$ as follows
\begin{equation*}
\varphi_t=\sum_{\Gc(w)\leqslant n}J_w\,V_w+\sum_{\Gc(w)\geqslant n+1}J_w\,V_w,
\end{equation*}
for $n\in\mathbb N$.
A stochastic Taylor numerical scheme of strong order $n/2$ would 
be the first term on the right shown; the remainder is the last term. 
The grading $\Gc(w)$ here is determined by the variance of the  
stochastic integrals $J_w$; zero letters in $w$ contribute a count of one
while non-zero letters contribute a count of one-half towards $\Gc(w)$.
An important technicality is to include in the integrator, the 
\emph{expectation} of the terms in the remainder at leading order. 
This is because not including them would decrease the
expected global order by one-half (an explanation can be found in Buckwar,
Malham \& Wiese 2012 or Malham \& Wiese 2009). In other words, 
a stochastic Taylor integrator of strong order $n/2$ is 
\begin{equation*}
\hat\varphi_t=\sum_{\Gc(w)\leqslant n}J_w\,V_w+\sum_{\Gc(w)=n+1}\bar{\Ec}(J_w)\,V_w,
\end{equation*}
where the expectations of the $J_w$, here denoted $\bar{\Ec}(J_w)$, 
are known analytically.
The \emph{Euler--Maruyama} and \emph{Milstein} numerical methods correspond to 
the cases $n=1$ and $n=2$, respectively, applied on successive
computation subintervals with the vector fields evaluated on the
initial data on each subinterval. Stochastic Runge--Kutta
methods are constructed by replacing the partial differential operators 
$V_w$ by finite differences. 

Another example is $f=\text{log}$ so that $\sigma_t=\text{log}\varphi_t$. 
This is the exponential Lie series 
which is the basis of the Castell--Gaines method (see Castell \& Gaines~1995, 1996;
also see Azencott 1982, Ben Arous 1989 and Castell 1993).
Truncating the exponential Lie series to $\hat\sigma_t$ generates a Lie polynomial
in the Lie algebra of vector fields. We assume we can
suitably approximately simulate the multiple integrals $J_w$ retained
in the truncation (more on this presently). Hence 
$\hat\varphi_t=\text{exp}\hat\sigma_t$
and our approximation $\hat y_t$ to the solution $y_t$ 
across $[0,t]$ can be generated as follows. For a given realization
of the $J_w$ terms retained, we simply solve the
ordinary differential system $u'=\hat\sigma_t\circ u$ for $u=u(\tau)$,
for $\tau\in[0,1]$ and $u(0)=y_0$. Though we might achieve this
analytically, more often one has to use a suitably accurate ordinary
differential integrator. In either case we have $u(1)\approx\hat y_t$. 

We measure the accuracy of a strong order integrator by 
the root-mean-square of its local remainder 
\begin{equation*}
r_t\coloneqq\varphi_t-\hat\varphi_t.
\end{equation*}
More precisely, we measure $\|r_t\circ y_0\|_{L^2}$ for each $y_0\in\mathbb R^N$,
i.e.\/ the square-root of the expectation of the Euclidean norm of $r_t\circ y_0$. 
Let us now clarify an important issue.
The order of a strong numerical method is determined 
by the set of multiple Wiener integrals simulated and included. 
Since the set of all multiple (Stratonovich) Wiener integrals is generated
by those based on Lyndon words (see Reutenauer 1993, p.~111 and Gaines 1994),
we need only simulate the multiple Wiener integrals indexed by Lyndon
words. The other multiple Wiener integrals of that order can be
computed by linear combinations of products of the appropriate 
Lyndon word multiple integrals of that order or less. 
However Lyndon word multiple integrals of the same order 
cannot be generated as such (or from each other). Hence
more correctly, the order of a method is determined 
by the set of Lyndon word multiple Wiener integrals included. 
Consequently, in general with this multiple integral set, 
a more accurate method can only have a better error constant 
and an improvement in order scaling is not possible.
Throughout this article we assume we can suitably 
approximate/simulate the Lyndon word multiple Wiener integrals 
up to the order required.
The bulk of computational effort in higher order strong simulation methods 
is devoted to this task, though there has been some recent advances 
on this front; see Wiktorsson~(2001), Lyons and Victoir (2004),
Levin \& Wildon (2008) and Malham \& Wiese~(2011).

Castell \& Gaines (1995, 1996) proved that their simulation
method of strong order one-half was \emph{asymptotically efficient} in
the sense of Newton (1991): it ``minimizes the leading coefficient
in the expansion of the mean-square errors as power series in
the sample step-size''. This property extends to their strong order
one method when the diffusion vector fields commute.
However, Malham \& Wiese (2009) demonstrated
that when the stochastic differential system above is driven by
a multi-dimensional driving Wiener process and the governing 
diffusion vector fields do not commute, then a strong numerical
simulation based on the exponential Lie series is not 
asymptotically efficient (independent of the vector fields);
also see Lord, Malham \& Wiese (2008). Malham \& Wiese (2009) 
proved, in the absence of drift, that a strong simulation
method generated by taking the sinhlog of the flowmap, truncating 
the resulting series and then taking the inverse sinhlog, is
\emph{efficient} to all orders. This means that 
the error of the sinhlog integrator is always smaller than
the error of the corresponding stochastic Taylor integrator
in the mean-square sense, independent of the vector fields. 
In this paper we:
\begin{enumerate}
\item Show that the natural context to study stochastic integrators
and their properties is the convolution algebra of endomorpisms
on the Hopf shuffle algebra of words;
\item Establish a new class of efficient stochastic 
integrators using this algebraic structure 
(we include drift and grade according to word length);
\item Prove that within this class, the sinhlog integrator 
generates the \emph{optimal} efficient stochastic
integrator to all orders. By this we mean that the error of
the integrator realizes its smallest possible value compared 
to the error of the corresponding stochastic Taylor integrator,
in the mean-square sense.
\end{enumerate}

Our paper is structured as follows. In \S\ref{sec:conv}
we demonstrate the direct relation between stochastic expansions
and the convolution shuffle algebra of endomorpisms
on the Hopf shuffle algebra of words. We define an
inner product structure on the convolution shuffle algebra 
of endomorpisms that is based on the correlation measure 
between multi-dimensional stochastic processes 
in \S\ref{sec:main}. We also demonstrate some
natural useful identities and orthogonality properties of 
endomorphisms therein. 
In \S\ref{sec:proof} we prove results~2 and 3 stated above. 
Finally in \S\ref{sec:conclu} we discuss the implications of 
our results and provide some concluding remarks.

\section{Stochastic expansions and the convolution shuffle algebra}
\label{sec:conv}
We introduce the convolution shuffle algebra and show it
is the natural context to study stochastic expansions. 
For the moment, we proceed formally as the basic material 
can be found in the monograph by Reutenauer~(1993); also
see Remark~\ref{rmk:homomorphism} below.
Dropping $J$'s and $V$'s, we can represent the 
stochastic Taylor series for the flowmap by 
\begin{equation*}
\varphi=\sum_{w} w\otimes w,
\end{equation*}
which lies in the product algebra 
$\Kb\la\Ab\ra_\shuf\otimes\Kb\la\Ab\ra_\conc$
over the commutative ring $\Kb=\mathbb R$. 
We are interested in the following two Hopf algebra structures on $\Kb\la\Ab\ra$.
One has shuffle $\sh$ as product and deconcatenation $\Delta$
as coproduct ($\Kb\la\Ab\ra_\shuf$ on the left). 
The other has concatenation as product and 
deshuffle as coproduct ($\Kb\la\Ab\ra_\conc$ on the right). 
The unit, counit and antipode are the same for both 
algebras $\Kb\la\Ab\ra_\shuf$ and $\Kb\la\Ab\ra_\conc$. 
We denote the empty word $\un\in \Kb\la\Ab\ra$.
The product of two terms in $\Kb\la\Ab\ra_\shuf\otimes\Kb\la\Ab\ra_\conc$ is 
\begin{equation*}
(u \otimes x)(v\otimes y)=(u\sh v)\otimes (xy), 
\end{equation*}
where we concatenate the words on the right representing the 
composition of vector fields.
On the left, $u\sh v$ represents the sum of all possible shuffles
of the words $u$ and $v$, representing the product of two
multiple integrals. 
\begin{remark}\label{rmk:homomorphism}
The homomorphism from $\Kb\la\Ab\ra_\shuf\otimes\Kb\la\Ab\ra_\conc$ 
to the free associative algebra of vector fields is established as follows 
(see Malham \& Wiese 2009, Section 2(c)). Let $\mathbb J$ denote 
the ring generated by multiple Stratonovich integrals 
and the constant random variable $1$, with pointwise multiplication and 
addition. Also let $\mathbb V$ denote the set of all vector fields
on $\mathbb R^N$. 
The flow map $\varphi$ defined in the introduction lies in 
$\mathbb J\langle\mathbb V\rangle\cong\bigoplus_{n\geqslant 0}\mathbb J\otimes\mathbb V_n$,
where $\mathbb V_n$ is the subset of vector fields $V_w$ with $w$ of length $n$.
The linear \emph{word-to-vector field map}
$\kappa\colon\mathbb R\la\Ab\ra\rightarrow\mathbb V$
given by $\kappa\colon w\mapsto V_w$
is a concatenation homomorphism, i.e.\ $\kappa(uv)=\kappa(u)\kappa(v)$
for any $u,v\in\Ab^\ast$. And the linear \emph{word-to-integral map}
$\mu\colon\mathbb R\la\Ab\ra\rightarrow\mathbb J$ given by $\mu\colon w\mapsto J_w$
is a shuffle homomorphism, i.e.\ $\mu(u\sh v)=\mu(u)\mu(v)$ for any $u,v\in\Ab^\ast$
(see Lyons, \emph{et. al.\ } 2007, p.~35 or Reutenauer 1993, p.~56;
this also underlies our choice to use Stratonovich integrals rather than
It\^o integrals which satisfy a quasi-shuffle relation). Hence the map
$\mu\otimes\kappa\colon\Kb\la\Ab\ra_\shuf\otimes\Kb\la\Ab\ra_\conc
\rightarrow\bigoplus_{n\geqslant0}\mathbb J\otimes\mathbb V_n$
is a Hopf algebra homomorphism which naturally extends to the
free associative algebra of vector fields.
\end{remark}
Suppose we apply a polynomial or power series
function to the flow-map $\varphi$, say $f=f(\varphi)$. 
For example suppose $f$ has a simple power series 
expansion 
\begin{equation*}
f(\varphi)=\sum_{k=0}^\infty c_k\,\varphi^k
\end{equation*}
with coefficients $\{c_n\in\Kb\colon n\geqslant 0\}$. 
Then the product in $\Kb\la\Ab\ra_\shuf\otimes\Kb\la\Ab\ra_\conc$
implies that after rearrangement, we can always express the result 
in the form
\begin{equation*}
f(\varphi)=\sum_{w\in\Ab^*} (F\circ w)\otimes w,
\end{equation*}
where $F\in\text{End}(\Kb\la\Ab\ra_{\shuf})$ is given by 
($|w|$ denotes the length of the word)
\begin{equation*}
F\circ w=\sum_{k=0}^{|w|}
c_k\sum_{\substack{u_1,\ldots,u_k\in\Ab^*\\w=u_1\ldots u_k}}u_1\sh\ldots\sh u_k.
\end{equation*}
See Reutenauer (1993, p.~58) or Malham and Wiese (2009) for more details.
This suggests we can encode the action 
of any such function $f$ of the flowmap $\varphi$ by an endomorphism 
$F\in\text{End}(\Kb\la\Ab\ra_{\shuf})$. Indeed the embedding 
$\text{End}(\Kb\la\Ab\ra_{\shuf})\to\Kb\la\Ab\ra_\shuf\otimes\Kb\la\Ab\ra_\conc$
given by 
\begin{equation*}
X\mapsto\sum_{w} X(w)\otimes w,
\end{equation*}
is an algebra homomorphism for the non-commutative convolution product defined 
for all $X,Y\in\text{End}(\Kb\la\Ab\ra_{\shuf})$ by
\begin{equation*}
X\star Y=\sh\circ(X\otimes Y)\circ\Delta.
\end{equation*}
Since the coproduct $\Delta$ here is deconcatenation, which takes a 
word $w$ and produces a sum of all possible two-partitions $u\otimes v$
of $w$, we see that explicitly
\begin{equation*}
\bigl(X\star Y\bigr)(w)=\sum_{\substack{u,v\in\Ab^*\\w=uv}}X(u)\sh Y(v).
\end{equation*}
Thus, we can encode and study the structure and properties  
of functions of the flowmap through endomorphisms 
$F\in\text{End}(\Kb\la\Ab\ra_{\shuf})$, with convolution as product. 
For convenience, we henceforth denote 
\begin{equation*}
\Hb\coloneqq\text{End}(\Kb\la\Ab\ra_{\shuf}), 
\end{equation*}
the convolution shuffle algebra of endomorphisms on $\Kb\la\Ab\ra_{\shuf}$, 
which is a unital associative non-commutative $\Kb$-algebra. 
The unit $\nu$ in $\Hb$ is the composition of the unit and counit. 
Indeed $\nu$ sends non-empty words to $0$ and the empty word to itself. 
For any Hopf algebra, by definition the antipode $S$ 
is the inverse of the identity endomorphism with respect 
to the convolution. Thus we have
\begin{equation*}
S\star\id=\id\star S=\nu. 
\end{equation*}
On $\Kb\la\Ab\ra_{\shuf}$ the antipode $S\in\Hb$ is given by 
$S(a_1\ldots a_n)\coloneqq(-1)^na_n\ldots a_1$, 
i.e.~it is the sign-reversing endomorphism. 
\begin{remark}
There is a dual convolution concatenation algebra which we 
could alternatively utilize; see Reutenauer~(2009; Section~1.5).
\end{remark}
\begin{remark} 
There is natural compatibility between convolution and composition in $\Hb$. 
For an algebra homomorphism $Z\in\Hb$ one verifies that 
$Z\circ (X\star Y)=(Z\circ X)\star (Z\circ Y)$. 
For a coalgebra homomorphism $Z\in\Hb$ we have 
$(X\star Y)\circ Z=(X\circ Z)\star (Y\circ Z)$. 
\end{remark}
As the shuffle product is commutative, one can show that if 
$X,Y\in\Hb$ are algebra homomorphisms, then $X\star Y$ 
is also an algebra homomorphism. 
The subset of algebra homomorphisms forms 
the group $\mathcal H\subset\Hb$ of $\Kb\la\Ab\ra_\shuf$-valued characters, 
with unit $\nu$. The inverse of $X\in\mathcal H$ is given by 
$X^{\star(-1)}\coloneqq X\circ S$. 
Note $\mathcal H$ is a subgroup of the Lie group
\begin{equation*}
\mathcal G\coloneqq\bigl\{X\in \Hb\colon X(\un)=\un\bigr\}.
\end{equation*}
The corresponding Lie algebra $\mathfrak h$ of infinitesimal characters 
is a Lie subalgebra of the subalgebra
\begin{equation*}
\mathfrak g\coloneqq\bigl\{X\in\Hb\colon X(\un)=0\bigr\},
\end{equation*}
of $\Hb$. The inverse in $\mathcal G$ is given by 
$X^{\star(-1)}\coloneqq\sum_{k\geqslant 0} (\nu-X)^{\star k}$. Note
that we denote by $X^{\star k}$ the $k$-factor convolution 
product $X\star X\star\cdots\star X$
for any $X\in\Hb$. Observe that, for $X\in\mathfrak g$, 
if $w$ has length less than $k$ then 
$X^{\star k}\circ w$ returns $0$. Hence the formal
sum for $X^{\star(-1)}$ makes sense and indeed, 
note that $(\nu-X)\in\mathfrak g$ for $X\in\mathcal G$. 
In fact we observe that $\mathcal G=\{\nu\}\oplus\mathfrak g$.
See Manchon (2008), Patras \& Reutenauer 2002 
and Patras (1994) for more details.

An important endomorphism is the \emph{augmented ideal projector} 
given by 
\begin{equation*}
J\coloneqq\id-\nu.
\end{equation*}
Thus $J$ sends non-empty words to themselves, 
but the empty word to $0$, i.e.~$J\in\mathfrak g$. 
Note for example, $J^{\star k}$ takes a word $w$ with $|w|\geqslant k$ and 
creates a sum of all possible $k$-partitions
of $w$ shuffled together (with the empty word an excluded partition). 
On the other hand, $\id^{\star k}$ also includes all partitions involving
the empty word. Now observe that the endomorphism
$F$ corresponding to the function $f$ of the flowmap above, with
the power series coefficients $\{c_k\}$, is 
the endomorphism defined by the series in $\Hb$: 
\begin{equation*}
f^\star(\id)=\sum_{k=0}^\infty c_k\,\id^{\star k}.
\end{equation*}

More generally, we can consider functions on $\Hb$. For example,
for $X\in\Hb$, with $X(\un)=\epsilon\,\un$ and $\epsilon\in\Kb$, 
we can construct an endomorphism through a series
expansion about a multiple $\epsilon$ of the unit $\nu$ as follows:
\begin{equation*}
f^\star(X)=\sum_{k=0}^\infty c_k\,(X-\epsilon\,\nu)^{\star k},
\end{equation*}
where $X-\epsilon\,\nu\in\mathfrak g$. 
Of particular interest are the bijective logarithm 
$\text{log}^\star\colon\mathcal G\to\mathfrak g$ 
and exponential $\text{exp}^\star\colon\mathfrak g\to\mathcal G$ 
maps defined for any $X\in\mathfrak g$ by
\begin{equation*}
\text{log}^\star(\nu+X)=\sum_{k=1}^\infty\frac{(-1)^{k+1}}{k}X^{\star k}
\qquad\text{and}\qquad
\text{exp}^\star(X)=\sum_{k=0}^\infty\frac{1}{k!}X^{\star k}.
\end{equation*}
The sinhlog and coshlog maps are defined for $X\in\mathcal G$ by 
\begin{equation*}
\shl^\star(X)=\tfrac12\bigl(X-X^{\star(-1)}\bigr)
\qquad\text{and}\qquad
\chl^\star(X)=\tfrac12\bigl(X+X^{\star(-1)}\bigr),
\end{equation*}
also have series representations in powers of $(X-\nu)$.
These maps and their compositional inverses underlie our main result.
For all $X,Y\in\Hb$, set $h^\star$ to be
\begin{equation*}
h^\star(X,Y)\coloneqq\bigl(X^{\star 2}+Y\bigr)^{\star(1/2)},
\end{equation*}
for which the square root exists. 
Then we have the compositional inverses
\begin{equation*}
\shl^{-1}(X)=X+h^\star(X,+\nu)
\quad\text{and}\quad
\chl^{-1}(X)=X+h^\star(X,-\nu),
\end{equation*}
i.e.\/ we have $\shl^{-1}\circ\shl^\star(X)=X$
and $\chl^{-1}\circ\chl^\star(X)=X$.

To illustrate this new perspective and its 
natural connection to stochastic expansions, consider three examples. 
First, we observe that the stochastic Taylor expansion for the flowmap 
is simply the identity $\id\in\mathcal H$. Second, the sinhlog function 
considered by Malham \& Wiese (2009) is given by
\begin{equation*}
\shl^\star(\id)=\tfrac12(\id-S), 
\end{equation*}
since $S=\id^{\star(-1)}$.
In other words, applying the sinhlog function to the flowmap $\varphi$ 
in $\Kb\la\Ab\ra_\shuf\otimes\Kb\la\Ab\ra_\conc$, corresponds to applying 
$\shl^\star$ to the identity $\id\in\mathcal H$. Third, 
the Eulerian idempotent, which is a Lie idempotent from the free associative
algebra to the free Lie algebra, is given by 
\begin{equation*}
\text{log}^\star(\id)=J-\tfrac12J^{\star 2}+\tfrac13 J^{\star 3}-\cdots
+\tfrac{(-1)^{k+1}}{k}J^{\star k}+\cdots.
\end{equation*}
This is the exponential Lie series or Chen--Strichartz formula 
(see Burgunder 2009, Chen 1957, Magnus 1954, Strichartz 1987 
and Baudoin 2004). To see this we use that $J^{\star k}$ 
can be expressed as a sum over permutations with a prescribed
descent set; see Reutenauer (1993; p.~65).
Note also that since the antipode is the inverse of the identity with 
respect to the convolution product, then 
\begin{equation*}
S=\nu-J+J^{\star 2}-J^{\star 3}+\cdots.
\end{equation*}
Hence we can also express the sinhlog endomorphism as
\begin{equation*}
\shl^\star(\id)
=J-\tfrac12J^{\star 2}+\tfrac12 J^{\star 3}-\cdots
+(-1)^{k+1}\tfrac{1}{2}J^{\star k}+\cdots.
\end{equation*}
These three examples belong to the subalgebra of endomorphisms
generated by the unit $\nu$ and 
augmented ideal projector $J$.
\begin{remark}
Note that the sinhlog and coshlog endomorphisms are projectors as
$\frac12(\id\pm S)\circ\frac12(\id\pm S)=\frac12(\id\pm S)$
and $\frac12(\id\pm S)\circ\frac12(\id\mp S)=0$.
\end{remark}
We conclude this section by defining some endomorphisms and 
their properties useful in our subsequent analysis.
\begin{definition}[Reversing and sign endomorphisms]
These endomorphisms in $\Hb$ are defined for any word 
$w=a_1\ldots a_n\in\Ab^*$ as follows:
(1) Reversing endomorphism: $|S|\colon w\mapsto a_n\ldots a_1$; and
(2) Sign endomorphism: $D\colon w\mapsto (-1)^nw$.
\end{definition}
Note, for example, that $D\circ D\equiv\id$ and $S=D\circ|S|=|S|\circ D$. 
Observe also that since $S\in\mathcal H$ we have $|S|(u\sh v)=|S|(u)\sh |S|(v)$.

\section{Convolution shuffle algebra with expectation inner product}\label{sec:main}
We have seen that classes of endomorphisms $F\in\Hb$ correspond 
to functions of the flowmap. Our goal here is to define an appropriate inner
product on $\Hb$ and analyze its properties. This will be modelled
on the mean-square measure of an $\mathbb R^N$-valued stochastic process,
constructed as follows (hereafter the real parameter $t>0$ is fixed). 

\subsection{Expectation endomorpism}
Let $\Db^*\subset\Ab^*$ denote the free monoid of words 
on the alphabet $\Db=\{0,11,\ldots,dd\}$.
\begin{definition}[Expectation map and endomorphism]
The \emph{expectation map} is the linear map 
$\bar{\Ec}\colon\Kb\la\Ab\ra_\shuf\to\Kb$ which,
in the case $\Ab$ indexes $d$ independent Wiener processes,
is given by 
\begin{equation*}
\bar{\Ec}\colon w\mapsto\begin{cases} 
t^{\nsf(w)}/\bigl(2^{\psf(w)}\nsf(w)!\bigr),&\quad w\in\Db^*,\\
0,&\quad w\in\Ab^*\backslash\Db^*.
\end{cases}
\end{equation*}
Here $\psf(w)$ is the number of non-zero consecutive pairs in $w$ 
from the alphabet $\mathbb D$ and $\nsf(w)=\zsf(w)+\psf(w)$, where
$\zsf(w)$ counts the number of `0' letters $w$ contains.

We define the \emph{expectation endomorphism} $\Ec\in\Hb$ as
\begin{equation*}
\Ec\colon w\mapsto\bar{\Ec}(w)\,\un. 
\end{equation*}
\end{definition}
Note the endomorphisms $X-\Ec\circ X\equiv (\id-\Ec)\circ X$ 
in $\Hb$ have zero expectation. Indeed $(\id-\Ec)$ lies in the kernel
of $E$ as $E\circ E=E$.
\begin{remark}
Strictly, the expectation map $\bar{\Ec}\colon\Kb\la\Ab\ra_\shuf\to\Kb[t]$,
where $t$ is a parameter commuting with all of $\Kb\la\Ab\ra_\shuf$,
and $(\id-\Ec)\circ X$ takes values in $\Kb[t]\,\un\oplus\Kb\la\Ab\ra_\shuf$.
\end{remark}
\begin{remark}\label{rmk:expectvalues}
The values quoted for the expectation map for any $w\in\Ab^\ast$
are the expectations of the corresponding multiple
Stratonovich integrals, see
Kloeden \& Platen (1999; eqns (5.2.34), (5.7.1)) or
Buckwar \textit{et al.\/ } (2012). Briefly, 
every Stratonovich integral labelled by $w\in\Ab^*$,
is a linear combination of It\^o integrals. 
The It\^o integrals concerned, cycle through the 
set of words which consist of $w$ and all words
$u$ obtained by successively replacing any two adjacent
non-zero equal indices in $w$ by $0$. Each replacement
contributes a factor one-half to the coefficient of 
the It\^o integral in the linear combination. Since
the expectation of any It\^o integral is zero unless
all its labelling letters are $0$, the expectation
of the Stratonovich integral is given by the 
deterministic integral remaining after the replacement
process (and zero if there isn't one). 
\end{remark}

\subsection{Inner product of endomorphisms}
Let $\{\mathsf V_w\}_{w\in\Ab^\ast}$ denote a given set of indeterminate vectors
indexed by words $w\in\Ab^\ast$. We use both $(u,v)$ and 
$\mathsf V_{uv}$ to denote the inner product of the vectors 
$\mathsf V_u$ and $\mathsf V_v$. Let $\mathsf V$ denote
the infinite square matrix indexed by the words $u,v\in\Ab^*$ 
with entries $\mathsf V_{u,v}$.
\begin{definition}[Inner product]
We define the \emph{inner product} of $X,Y\in\Hb$
with respect to $\mathsf V$ to be
\begin{equation*}
\langle X, Y\rangle_\Hb\coloneqq\sum_{u,v\in\Ab^*}
\bar{\Ec}\,\bigl(X(u)\sh Y(v)\bigr)\,(u,v).
\end{equation*}
The norm of an endomorphism $X\in\Hb$
is $\|X\|_{\Hb}\coloneqq\langle X, X\rangle^{1/2}$.
\end{definition}
Let us motivate this definition and provide some 
equivalent characterizations. Suppose we apply
two separate functions to the flowmap $\varphi$
which are characterized by the endomorphisms 
$X$ and $Y$ in $\Hb$. We assume the governing
vector fields and driving Wiener processes are
given as well as some data $y_0\in\mathbb R^N$.
With a slight abuse of notation we express
the two stochastic processes $x_t$ and $y_t$ 
associated with $X$ and $Y$ as follows
\begin{equation*}
x_t=\sum_{w\in\Ab^*}X(w)\,V_w(y_0)
\qquad\text{and}\qquad
y_t=\sum_{w\in\Ab^*}Y(w)\,V_w(y_0).
\end{equation*}
Our definition is based on the $L^2$-inner product
$\langle x_t, y_t\rangle_{L^2}=\bar{\Ec}\,(x_t,y_t)$.
We would like our inner product to be independent of the 
data $y_0$, hence we replace the vector fields evaluated
at the data by the set of indeterminants $\{\mathsf V_w\}_{w\in\Ab^\ast}$.

An equivalent characterization of the inner product is 
as follows. The action of an endomorphism $X$ on a 
word can be written $X(w)=\sum_{u\in\Ab^*}\mathsf X_{w,u} u$,
for some $\Kb$-valued coefficients $\mathsf X_{w,u}$.
In other words we can represent $X\in\Hb$ by a matrix
$\mathsf X\in\Kb^\infty\times\Kb^\infty$ indexed by the
words $w\in\Ab^*$. We can order the indexing by word length
and then lexicographically within each length (for example).
Now let $\mathsf W\in\Kb^\infty\times\Kb^\infty$ denote the 
symmetric matrix of values $\bar{\Ec}\,\{u\sh v\}$ over all 
$u,v\in\Ab^*$ and $\mathsf V\in\Kb^\infty\times\Kb^\infty$ 
the symmetric matrix defined above. 
Then using our definition above, we observe that
\begin{align*}
\langle X, Y\rangle_\Hb=&\;\sum_{u,v,u',v'\in\Ab^*}
\mathsf X_{u,u'}\mathsf Y_{v,v'}\mathsf W_{u',v'}\mathsf V_{u,v}\\
=&\;\mathrm{tr}\,\bigl(\mathsf X\,\mathsf W\,\mathsf Y^{\dag}\,\mathsf V^{\dag}\bigr)\\
=&\;\mathrm{tr}\,\bigl(\mathsf V^{\frac12}\,\mathsf X\,
\mathsf W\,\mathsf Y^{\dag}\,(\mathsf V^{\frac12})^{\dag}\bigr),
\end{align*}
where $\dag$ denotes matrix transpose.
Note that both $\mathsf W$ and $\mathsf V$ are positive definite.
Hence we can view the definition of the inner product above as
defined with respect to the weights $\mathsf W$ and $\mathsf V$
where the stochastic and geometric information are respectively
encoded. All results we subsequently establish will hold independent
of $\mathsf V$. Lastly, we now note that our inner product is: 
(i) Symmetric as the shuffle product and vector inner product are commutative;
(ii) Bilinear as the endomorphisms and expectation are linear; 
(iii) Positive definite as the matrix of 
expectations $\mathsf W$ is positive definite. 
\begin{remark}
We assume that the solution to our stochastic differential system
for any data $y_0$ is $L^2$-valued on the time interval of interest. 
This is equivalent to saying that $\|\id\|_{\Hb}^2$ is finite, which
is equivalent to assuming $\mathrm{tr}\,(\mathsf W\mathsf V)$ is finite.
\end{remark}

\subsection{Graded class subspace}
We often will be concerned with endomorphisms that act on subspaces
of $\Kb\la\Ab\ra_\shuf$ selected according to a grading. 
Recall that $\Kb\la\Ab\ra_\shuf$ is a connected Hopf algebra 
graded by the length of words. 
\begin{definition}[Grading map]
This is the linear map $\Gc\colon\Kb\la\Ab\ra_\shuf\to\mathbb Z_+$ given by 
\begin{equation*}
\Gc\colon w\mapsto|w|. 
\end{equation*}
The empty word has length zero, i.e.\/ $|\un|=0$.
\end{definition}
\begin{remark}
There is another natural grading on $\Ab^\ast$ given by the variance of
the words $w\in\Ab^\ast$, as mentioned in the introduction.
This is determined by the exponent in $t$ when computing
the root-mean-square deviation from the expectation of $w$, i.e.\/
the square-root of $\bar{\Ec}\circ(w-\Ec(w))^{\sh 2}$. Briefly, 
for any word $w$, zero letters contribute a count of one 
while non-zero letters contribute a count of one-half towards
the grade value. The nuances of the two gradings are revealed
in \S\ref{sec:conclu}.
\end{remark}
\begin{definition}[Graded class subspace]
For a given $n\in\mathbb Z_+$, let $\Sb_n$ denote
the subspace of\/ $\Kb\la\Ab\ra_\shuf$ of
all words $w$ of given length $\Gc(w)=n$. We set
\begin{equation*}
\Sb_{\leqslant n}\coloneqq\bigoplus_{k\leqslant n}\Sb_k
\qquad\text{and}\qquad
\Sb_{\geqslant n}\coloneqq\bigoplus_{k\geqslant n}\Sb_k.
\end{equation*}
A subspace $\mathbb S$ is a \emph{graded class subspace} 
if, for a given $n\in\mathbb Z_+$, $\Sb=\Sb_n$, $\Sb=\Sb_{\leqslant n}$
or $\Sb=\Sb_{\geqslant n}$. For any graded class subspace $\Sb$, we 
denote by 
\begin{equation*}
\ind_\Sb\colon\Kb\la\Ab\ra_\shuf\to\Sb,
\end{equation*}
the canonical projection from $\Kb\la\Ab\ra_\shuf$ onto $\Sb$. 
\end{definition}
Hereafter we set, for any graded class subspace $\mathbb S$,
for all $X,Y\in\Hb$:
\begin{equation*}
\la X, Y\ra\coloneqq\bigl\langle X\circ\ind_{\Sb},Y\circ\ind_{\Sb}\bigr\rangle_\Hb
\qquad\text{and}\qquad 
\|X\|\coloneqq\bigl\|X\circ\ind_{\Sb}\bigr\|_\Hb.
\end{equation*}
We carefully state the subspace $\Sb$ in all instances, so no confusion should arise.

\subsection{Properties of the sinhlog and coshlog endomorphisms}
The following lemma from Malham \& Wiese (2009) is a crucial ingredient
in what follows. We restate it here and discuss 
an extension we rely upon for clarity.
\begin{lemma}[Malham \& Wiese (2009); Lemma~4.3]\label{lemma:MW2009}
For any pair $u,v\in\Ab^*$, we have 
$\Ec\,(u\sh v)\equiv \Ec\,\bigl((|S|\circ u)\sh (|S|\circ v)\bigr)$.
\end{lemma}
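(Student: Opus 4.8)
The plan is to reduce the identity to two ingredients: that the reversing endomorphism $|S|$ is an automorphism of the shuffle product, and that the scalar expectation map $\bar{\Ec}$ is invariant under word reversal. The first ingredient is already recorded just before the lemma: since $S\in\mathcal H$ is a shuffle character and $S=D\circ|S|$ with $D$ manifestly a shuffle homomorphism (it rescales the homogeneous length-$n$ component by $(-1)^n$, and the shuffle is length-graded), the composite $|S|=D\circ S$ is a shuffle homomorphism, so $|S|(u\sh v)=|S|(u)\sh|S|(v)$ for all $u,v\in\Ab^*$. Reading this backwards, $(|S|\circ u)\sh(|S|\circ v)=|S|(u\sh v)$, so the two sides of the claim differ only by the insertion of $|S|$ inside the argument of $\bar{\Ec}$.

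The heart of the argument is therefore the reversal invariance $\bar{\Ec}\circ|S|=\bar{\Ec}$ on single words, which I would prove directly from the definition. The key observation is that membership in $\Db^*$ is preserved under reversal: $w\in\Db^*$ exactly when $w$ factors as a concatenation of blocks, each equal to a single letter $0$ or to a nonzero pair $ii$, and since each such block is a palindrome, reversing $w=b_1\cdots b_k$ produces $b_k\cdots b_1\in\Db^*$. As reversal is an involution, if $w\notin\Db^*$ then $|S|(w)\notin\Db^*$ as well, so $\bar{\Ec}$ vanishes at both. When $w\in\Db^*$, reversal merely reorders the blocks, leaving the number of $0$-blocks $\zsf(w)$ and the number of nonzero pair-blocks $\psf(w)$ — and hence $\nsf(w)=\zsf(w)+\psf(w)$ — unchanged; since $\bar{\Ec}(w)=t^{\nsf(w)}/\bigl(2^{\psf(w)}\nsf(w)!\bigr)$ depends only on these counts, we conclude $\bar{\Ec}(|S|(w))=\bar{\Ec}(w)$.

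Finally I would combine the two ingredients. Writing $u\sh v=\sum_w c_w\,w$ as a finite linear combination of words and using linearity of $\bar{\Ec}$ together with the reversal invariance just established,
\[
\bar{\Ec}\bigl((|S|\circ u)\sh(|S|\circ v)\bigr)
=\bar{\Ec}\bigl(|S|(u\sh v)\bigr)
=\sum_w c_w\,\bar{\Ec}\bigl(|S|(w)\bigr)
=\sum_w c_w\,\bar{\Ec}(w)
=\bar{\Ec}(u\sh v).
\]
Since $\Ec\colon w\mapsto\bar{\Ec}(w)\,\un$, multiplying this scalar identity by the empty word $\un$ yields the stated equality of endomorphism values $\Ec(u\sh v)=\Ec\bigl((|S|\circ u)\sh(|S|\circ v)\bigr)$.

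The only step with genuine content is the reversal invariance of $\bar{\Ec}$, so I expect the main obstacle to be the bookkeeping there: carefully separating the case $w\in\Db^*$ from $w\notin\Db^*$ and checking that the block-counts $\zsf$ and $\psf$ (equivalently $\nsf$) really are preserved by reversal. Once that is in hand, the homomorphism property of $|S|$ and the passage to shuffles by linearity are immediate, and no further difficulty arises.
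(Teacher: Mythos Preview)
Your argument is correct, but it takes a genuinely different route from the paper's. The paper does not reprove the lemma here; it cites Malham \& Wiese (2009) and sketches their original argument, which is stochastic--analytic: one expands each Stratonovich integral as a linear combination of It\^o integrals, observes that word reversal commutes with that Stratonovich--to--It\^o conversion (same coefficients, reversed It\^o words), and then invokes Kloeden \& Platen's result that the expectation of a product of two It\^o integrals depends only on invariants (number of nonzero letters, lengths of runs of $0$'s) that are preserved under reversal. Your proof bypasses all of this by working purely with the explicit combinatorial formula for $\bar{\Ec}$ already recorded in the paper: you factor the problem as $\bar{\Ec}\bigl((|S|u)\sh(|S|v)\bigr)=\bar{\Ec}\bigl(|S|(u\sh v)\bigr)$ via the shuffle--homomorphism property of $|S|$, and then verify $\bar{\Ec}\circ|S|=\bar{\Ec}$ word by word from the block structure of $\Db^*$.

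What each approach buys: yours is shorter and entirely algebraic---once the formula for $\bar{\Ec}$ is granted, no appeal to It\^o calculus or Kloeden \& Platen is needed, and the reversal invariance of $\zsf$, $\psf$, $\nsf$ follows immediately from the observation that the $\Db$-block factorisation is unique and each block is a palindrome. The paper's route, on the other hand, explains \emph{why} the formula for $\bar{\Ec}$ has this symmetry, tracing it back to the underlying stochastic structure; it is also the argument that originally established the result before the closed form for $\bar{\Ec}$ was isolated. Both are valid, and your version is a clean self-contained alternative given the definitions in \S\ref{sec:main}.
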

Though the context for this lemma was the alphabet $\Ab=\{1,\ldots,d\}$,
it in fact extends to $\Ab=\{0,1,\ldots,d\}$. The proof detailed in
Malham \& Wiese relies on two results. First, any Stratonovich integral 
can be expressed as a linear combination of It\^o integrals, as described in
Remark~\ref{rmk:expectvalues} (also see Kloeden \& Platen 1999; eqn (5.2.34)). 
Importantly, reversing the word associated with a Stratonovich integral
generates a mirror linear combination of It\^o integrals with their
respective words reversed (with the same coefficients).
Second, the expectation of the product of any two It\^o integrals 
only depends on the number of non-zero letters and the lengths of
subwords containing only $0$ letters. These characteristic quantities 
are invariant to reversing the words concerned 
(see Kloeden \& Platen 1999; Lemma 5.7.2). 
Both of these results from Kloeden \& Platen 
are stated for $\Ab=\{0,1,\ldots,d\}$.
\begin{remark}
Importantly, note that if $u$ and $v$ have the 
same length, then we can replace $|S|$ by $S$
as the result is then invariant to the sign in
the antipode $S$. 
This observation underlies the restriction
to $\Sb=\Sb_n$ when $\Ab=\{0,1,\ldots,d\}$ in 
Lemma~\ref{lemma:rev} below.
\end{remark}
We now state the main lemma of this section, outlining
properties of the principle endmorphisms we have thusfar
highlighted, in particular the endomorphisms
\begin{equation*}
\shl^\star(\id)=\tfrac12\bigl(\id-S\bigr)
\qquad\text{and}\qquad
\chl^\star(\id)=\tfrac12\bigl(\id+S\bigr).
\end{equation*} 
The results herein are used to establish the 
optimal efficiency properties of the 
sinhlog integrator in \S\ref{sec:proof}.
\begin{lemma}\label{lemma:rev}
We assume either, $\Ab=\{0,1,\ldots,d\}$ 
and $\Sb$ is the graded class subspace $\Sb=\Sb_n$, 
or, $\Ab=\{1,\ldots,d\}$ and $\Sb$ is any graded class subspace.
Then for any $X,Y\in\Hb$ (and for any $\mathsf V$) 
we have the following properties:
\begin{enumerate}
\item $\langle X, Y\rangle=\bigl\langle |S|\circ X, |S|\circ Y\bigr\rangle$;
\item $\bigl\langle |S|, |S|\bigr\rangle=\langle S, S\rangle=\langle\id, \id\rangle$;
\item $\langle\text{\emph{sinhlog}}^\star(\id),\text{\emph{coshlog}}^\star(\id)\rangle=0$;
\item $\|\id\|^2=\|\text{\emph{sinhlog}}^\star(\id)\|^2
+\|\text{\emph{coshlog}}^\star(\id)\|^2$;
\item $\langle X,E\circ Y\rangle=\langle E\circ X,E\circ Y\rangle$;
\item $\langle E\circ\id,E\circ\id\rangle=\bigl\langle E\circ|S|,E\circ|S|\bigr\rangle
=\langle E\circ S,E\circ S\rangle$;
\item $\bigl\langle E\circ\text{\emph{sinhlog}}^\star(\id),
E\circ\text{\emph{coshlog}}^\star(\id)\bigr\rangle=0$;
\item $\langle|S|,J^{\star n}\rangle=\langle\id,J^{\star n}\rangle$,
\end{enumerate}
where property~8 only holds for $\Sb=\Sb_n$, independent of the alphabet.
Property~3 indicates $\text{\emph{sinhlog}}^\star(\id)$ 
and $\text{\emph{coshlog}}^\star(\id)$ are
\emph{orthogonal} with respect to the inner product.
\end{lemma}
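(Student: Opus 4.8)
The plan is to treat the eight properties in dependency order, resting everything on the reversal symmetry of Lemma~\ref{lemma:MW2009} together with three structural facts: that $|S|$ is a shuffle homomorphism and an involution, that $S=D\circ|S|$ acts as $(-1)^{|w|}|S|$ on a word $w$, and that $E$ is idempotent with image in the multiples of $\un$ and $\bar\Ec(\un)=1$.

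First I would prove property~1 by unfolding $\la X,Y\ra=\la X\circ\ind_\Sb,Y\circ\ind_\Sb\ra_\Hb$ into its defining sum and writing $a_u\coloneqq(X\circ\ind_\Sb)(u)$, $b_v\coloneqq(Y\circ\ind_\Sb)(v)$. Applying Lemma~\ref{lemma:MW2009}, extended bilinearly, to each pair $a_u,b_v$ replaces $\bar\Ec(a_u\sh b_v)$ by $\bar\Ec\bigl(|S|(a_u)\sh|S|(b_v)\bigr)$; since $|S|$ is linear, $|S|(a_u)=(|S|\circ X\circ\ind_\Sb)(u)$, and the sum reassembles into $\la|S|\circ X,|S|\circ Y\ra$. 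Property~2 is then immediate: taking $X=Y=\id$ gives $\la|S|,|S|\ra=\la\id,\id\ra$, while $\la S,S\ra=\la\id,\id\ra$ follows from $S=(-1)^{|w|}|S|$, so that the two $S$-factors contribute a sign $(-1)^{|u|+|v|}$ which equals $+1$ under either hypothesis. Properties~3 and~4 are then purely algebraic: expanding $\shl^\star(\id)=\tfrac12(\id-S)$ and $\chl^\star(\id)=\tfrac12(\id+S)$ and using bilinearity and symmetry, the cross terms cancel and property~2 gives $\la\shl^\star(\id),\chl^\star(\id)\ra=\tfrac14\bigl(\la\id,\id\ra-\la S,S\ra\bigr)=0$; property~4 is Pythagoras applied to $\id=\shl^\star(\id)+\chl^\star(\id)$.

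For property~5 I would use $E\circ(\id-E)=E-E\circ E=0$, so $(\id-E)\circ X$ lies in the kernel of $E$, i.e.\ $\bar\Ec\bigl(((\id-E)\circ X)(w)\bigr)=0$ for every $w$. Since $(E\circ Y)(v)=\bar\Ec(Y(v))\,\un$ is always a multiple of $\un$ and $a\sh\un=a$, the inner product $\la Z,E\circ Y\ra$ collapses, using $\bar\Ec(\un)=1$, to $\sum_{u,v}\bar\Ec(Z(\ind_\Sb u))\,\bar\Ec(Y(\ind_\Sb v))\,(u,v)$; taking $Z=(\id-E)\circ X$ kills the first factor and yields property~5. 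The same collapse reduces properties~6 and~7 to $E$-analogues of 2 and 3: writing $\alpha^X_u\coloneqq\bar\Ec(X(\ind_\Sb u))$ gives $\la E\circ X,E\circ Y\ra=\sum_{u,v}\alpha^X_u\alpha^Y_v(u,v)$, and Lemma~\ref{lemma:MW2009} with $v=\un$ gives $\bar\Ec(|S|(w))=\bar\Ec(w)$, so $\alpha^{|S|}=\alpha^{\id}$; the $S$-case again carries the harmless sign $(-1)^{|u|+|v|}$, proving property~6, and property~7 follows from it exactly as property~3 followed from property~2.

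For property~8 I would note that on $\Sb_n$ the only $n$-partition of a length-$n$ word into non-empty blocks is into single letters, so $J^{\star n}(a_1\cdots a_n)=a_1\sh\cdots\sh a_n$; as $|S|$ fixes single letters and is a shuffle homomorphism, $J^{\star n}(v)$ is $|S|$-invariant on $\Sb_n$. Writing out $\la\id,J^{\star n}\ra$ and using Lemma~\ref{lemma:MW2009} to move the reversal onto the second factor, where it acts trivially, produces $\la|S|,J^{\star n}\ra$ directly; because this never converts $|S|$ into $S$ it needs no parity bookkeeping and holds for either alphabet, but it genuinely requires $\Sb=\Sb_n$ for $J^{\star n}$ to land on shuffles of letters. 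I expect the main obstacle to be precisely this bookkeeping that separates $|S|$ from $S$: Lemma~\ref{lemma:MW2009} controls only the reversal $|S|$, whereas $\shl^\star(\id)$ and $\chl^\star(\id)$ are built from the signed antipode $S$, and converting between them introduces the factor $(-1)^{|u|+|v|}$. Verifying that this factor is always $+1$ --- forced by the common length $n$ when $\Sb=\Sb_n$, or, when $\Ab=\{1,\ldots,d\}$, by $\bar\Ec$ being supported on even-length words of $\Db^*$ --- is exactly what pins down the two admissible regimes, and is the step I would be most careful to get right.
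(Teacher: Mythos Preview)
Your proposal is correct and follows essentially the same route as the paper. The paper's proof is considerably terser---it handles property~1 by the same bilinear expansion through Lemma~\ref{lemma:MW2009}, calls property~2 ``a special case of property~1'' (the sign bookkeeping you spell out is what the paper places in the remark immediately preceding the lemma), derives properties~3 and~4 from~2 exactly as you do, and then dispatches properties~5--7 with the single phrase ``follow from the properties of the expectation endomorphism'' and property~8 with ``follows directly from the commutativity of the shuffle product''; your expanded treatment of these (the collapse of $\la Z,E\circ Y\ra$ via $a\sh\un=a$, and the $|S|$-invariance of $J^{\star n}$ on $\Sb_n$) is precisely what those phrases unpack to.
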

\begin{proof}
We establish properties 1--8 in order. 
Using the linearity properties of the expectation map
and reversing endomorphism $|S|$ we observe that 
\begin{align*}
\Ec\,\bigl(X(u)\sh Y(v)\bigr)
=&\sum_{w_1,w_2}\mathsf X_{uw_1}\mathsf Y_{vw_2}\,\Ec\,(w_1\sh w_2)\\
=&\sum_{w_1,w_2}\mathsf X_{uw_1}\mathsf Y_{vw_2}\,\Ec\,\bigl((|S|\circ w_1)\sh (|S|\circ w_2)\bigr)\\
=&\Ec\,\bigl((|S|\circ X)(u)\sh (|S|\circ Y)(v)\bigr),
\end{align*}
and property~1 follows. Property~2 is a special case of property~1.
Using property~2 and Lemma~\ref{lemma:MW2009} we deduce that
$4\langle\shl^\star(\id),\chl^\star(\id)\rangle
=\langle\id,\id\rangle-\langle S, S\rangle=0$, thus establishing
property~3. Property~4 follows directly when we observe that
sinhlog and coshlog additively decompose the identity, 
i.e.\/ $\id=\shl^\star(\id)+\chl^\star(\id)$. Properties~5--7
now follow from the properties of the expectation endomorphism.
Property~8 follows directly from the commutivity of the shuffle product.
\end{proof}

\section{Sinhlog and the optimal efficient integrator}\label{sec:proof}
Recall our construction of a stochastic integrator for a given 
smooth function $f\colon\text{Diff}(\mathbb R^N)\to\text{Diff}(\mathbb R^N)$
we outlined in steps 1--3 in the introduction.
In terms of the convolution shuffle algebra and corresponding
endomorphism $f^\star(\id)\in\Hb$, those steps and concepts therein, 
have natural concise translations. Proceeding through the 
construction, by direct analogy with $\hat\varphi_t$, 
a general stochastic integrator has the form
\begin{equation*}
\widehat{\id}\coloneqq f^{-1}\circ\ind_{\Sb_{\leqslant n}}\circ f^\star(\id).
\end{equation*}
The error associated with this approximation is
\begin{equation*}
R\coloneqq\id-\widehat{\id}. 
\end{equation*}
One integrator will be more accurate than another if the 
$\Hb$-norm of its associated error $\|R\|_{\Hb}$ is smaller than the other.
\begin{remark}\label{remark:gradepresfollow}
In light of our comments in the introduction that the
set of Lyndon word multiple integrals included in an
integrator determine its order, we will
restrict ourselves to functions $f^\star\colon\Hb\to\Hb$ that are
grade preserving. All the functions herein such as the logarithm,
exponential, sinhlog, coshlog or any power series in $J$
are grade preserving (their action on words  
generates linear combinations of words of the same length).
\end{remark}
\begin{remark}\label{remark:expect}
Also as mentioned in the introduction, we need to include 
the expectations of the terms in the remainder in $\Sb_{n+1}$
in the integrator.
These are in general encoded by $\Ec\circ\ind_{\Sb_{n+1}}\circ R$,
so the effective \emph{leading order terms} in the remainder are
$(\id-\Ec)\circ\ind_{\Sb_{n+1}}\circ R$.
We highlight this at the appropriate junctures.
\end{remark}
\begin{definition}[Pre-remainder]
We define the \emph{pre-remainder} $Q$ to be the remainder terms 
after applying the endomorphism $f^\star(\id)$ and then truncating
to include the terms in $\Sb_{\leqslant n}$, i.e.\/ it is
\begin{equation*}
Q\coloneqq f^\star(\id)-\ind_{\Sb_{\leqslant n}}\circ f^\star(\id).
\end{equation*}
\end{definition}
The relationship between $R$ and $Q$ plays a key role in our
subsequent analysis. 

We now examine the properties of integrators based on
the sinhlog endomorphism, indeed of a whole class of endomorphisms
to which it belongs, as well as the coshlog endomorphism. 
First we focus on the sinhlog endomorphism and re-establish
that it is an efficient integrator, in the context
of the convolution shuffle algebra. As we do so, we will see 
natural questions that emerge, motivating us to dig deeper.
\begin{definition}[Efficient integrator]
A numerical approximation to the solution
of a stochastic differential equation is an 
\emph{efficient integrator} if it generates
a strong numerical integration scheme that is more accurate in
the mean square sense
than the corresponding stochastic Taylor integration scheme of the same
strong order, independent of the governing vector fields and to all orders. 
In other words, if $R$ denotes the remainder of an
integrator, it is efficient if
for $\Sb=\Sb_{n+1}$ for any $n$, and for any $\mathsf V$:
\begin{equation*}
\|(\id-E)\circ R\|^2\leqslant\|(\id-E)\circ\id\|^2.
\end{equation*}
\end{definition}
The sinhlog endomorphism of interest has the expansion
\begin{equation*}
\shl^\star(\id)=J-\tfrac12 J^{\star2}+\tfrac12 J^{\star3}-\tfrac12 J^{\star4}+\cdots.
\end{equation*}
To construct a sinhlog integrator of strong order $n/2$, 
we start by applying the projection operator $\ind_{\Sb_{\leqslant n}}$ 
to $\shl^\star(\id)$; call the result 
\begin{equation*}
P\coloneqq\ind_{\Sb_{\leqslant n}}\circ\shl^\star(\id).
\end{equation*}
Since $J^{\star (n+1)}$ is zero for any words $w$ with $|w|<n+1$ and  
each of the terms in the expansion above is grade preserving, 
we see that
\begin{equation*}
P=\bigl(J-\tfrac12 J^{\star2}+\cdots+\tfrac12(-1)^{n+1}J^{\star n}\bigr)\circ\ind_{\Sb_{\leqslant n}}.
\end{equation*}
The pre-remainder is thus given by 
\begin{equation*}
Q=\shl^\star(\id)\circ\ind_{\Sb_{\geqslant n+1}}.
\end{equation*}
The compositional inverse of the 
sinhlog endomorphism is given by $\shl^{-1}\circ X=X+h^\star(X,+\nu)$ where
$h^\star(X,+\nu)\coloneqq(\nu+X^{\star 2})^{\star(1/2)}$ has the expansion
\begin{equation*}
h^\star(X,+\nu)=\nu+\tfrac12 X^{\star 2}-\tfrac18 X^{\star 4}+\cdots.
\end{equation*}
By definition, the error $R$ in the approximation 
$\widehat{\id}=\shl^{-1}\circ P$ is given by 
{\allowdisplaybreaks
\begin{align*}
R=&\;\id-\shl^{-1}\circ P\\ 
=&\;\shl^{-1}\circ(P+Q)-\shl^{-1}\circ P\\
=&\;Q+h^\star(P+Q,+\nu)-h^\star(P,+\nu)\\
=&\;Q+\tfrac12\bigl((P+Q)^{\star 2}-P^{\star 2}\bigr)+\cdots\\
=&\;Q+\tfrac12\bigl(P\star Q+Q\star P\bigr)+\mathcal O\bigl(Q^{\star 2}\bigr).
\end{align*}}
The leading order term in $R$ is
\begin{equation*}
Q\circ\ind_{\Sb_{n+1}}.
\end{equation*}
We justify this as follows. Consider the term $P\star Q$.
Since at leading order $\shl^\star(\id)=J$,
we see at leading order
\begin{equation*}
P\star Q=\bigl(J\circ \ind_{\Sb_{\leqslant n}}\bigr)\star\bigl(J\circ \ind_{\Sb_{\geqslant n+1}}\bigr).
\end{equation*}
This means that for some coefficients $c_w\in\Kb$,
\begin{equation*}
\sum_w \bigl((P\star Q)\circ w\bigr)\otimes w=\sum_{|w|\geqslant n+2}c_w\, w\otimes w.
\end{equation*}
This follows from the fact that the two indicator functions 
annihilate any lower order terms in the two-part deconcatenation
implied by the convolution, and that $J$ annihilates
the empty word. We also only retain the leading order
terms in $Q$ by applying $\ind_{\Sb_{n+1}}$ as shown in the 
final step above.
\begin{remark}
We shall use the big $\mathcal O$ notation such as
$\mathcal O(P\star Q)$ or $\mathcal O(Q^{\star2})$ above to 
denote endomorphisms that only generate terms involving
words that are higher order with respect to the grading 
$\Gc$ than those generated by the preceding endomorphisms.
\end{remark}
Thus using Lemma~\ref{lemma:rev}(3) with $\Sb=\Sb_{n+1}$ we have 
\begin{equation*}
\|\id\|^2=\|Q\|^2+\|\chl^\star(\id)\|^2.
\end{equation*}
Now using $\id=\shl^\star(\id)+\chl^\star(\id)$ and 
Lemma~\ref{lemma:rev}(5,7) we deduce that
\begin{equation*}
\bigl\|(\id-E)\circ\id\bigr\|^2=\bigl\|(\id-E)\circ Q\bigr\|^2
+\bigl\|(\id-E)\circ\chl^\star(\id)\bigr\|^2.
\end{equation*}
Recall from Remark~\ref{remark:expect} that we need to include in our 
integrators the expectation of the leading order terms in the remainder. 
This means the remainders for the stochastic Taylor expansion and 
sinhlog integrators are $(\id-E\circ\id)\circ\ind_{\Sb_{n+1}}$
and $(Q-E\circ Q)\circ\ind_{\Sb_{n+1}}$, respectively, since 
at leading order $R=Q\circ\ind_{\Sb_{n+1}}$.
We have thus established that integrators based
on the sinhlog endomorphism are at least as accurate
as corresponding stochastic Taylor integrators at
leading order, i.e.\/ they are efficient.

We can prove the following extension of Corollary~4.2
in Malham \& Wiese (2009) to the alphabet $\Ab^*=\{0,1,\ldots,d\}$;
indeed assume this to be the alphabet hereafter.
\begin{lemma}
With $\Sb=\Sb_{n+1}$, if $n$ is \emph{odd}, 
then the error of the sinhlog integrator is optimal; it realizes its
smallest possible value compared to the error of the corresponding 
stochastic Taylor integrator. 
\end{lemma}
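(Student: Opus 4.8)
The plan is to reduce the leading-order error of an integrator in the class to a one-parameter quadratic whose unique minimum is the sinhlog integrator. First I would record, exactly as in the sinhlog computation above, that for any grade-preserving $f$ the remainder at leading order is $R = Q\circ\ind_{\Sb_{n+1}} = f^\star(\id)\circ\ind_{\Sb_{n+1}}$, since the corrections $P\star Q$, $Q\star P$ and $\mathcal O(Q^{\star 2})$ only generate words of grade $\geqslant n+2$. Using $\id=\shl^\star(\id)+\chl^\star(\id)$ together with $S=\id^{\star(-1)}$, I would then exhibit the admissible leading remainders on $\Sb_{n+1}$ as the pencil
\[
R=\shl^\star(\id)+\lambda\,\chl^\star(\id),
\]
in which $\lambda=0$ is the sinhlog integrator and $\lambda=1$ is the stochastic Taylor integrator.

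The decisive use of the parity hypothesis is next. Since $n$ is odd, $n+1$ is even, so on $\Sb_{n+1}$ the sign endomorphism $D$ acts as the identity and hence $S=D\circ|S|=|S|$ there (this is the remark following Lemma~\ref{lemma:MW2009}). Because reversing a word preserves membership in $\Db^*$ and the counts $\nsf$ and $\psf$, we have $\bar{E}\circ|S|=\bar{E}$, and therefore $E\circ S=E$ on $\Sb_{n+1}$. Consequently
\[
E\circ\shl^\star(\id)=\tfrac12\bigl(E-E\circ S\bigr)=0,\qquad E\circ\chl^\star(\id)=E,
\]
so on $\Sb_{n+1}$ the sinhlog part carries none of the expectation and the coshlog part carries all of it; in particular $(\id-E)\circ\shl^\star(\id)=\shl^\star(\id)$.

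With these identities I would expand the error of the pencil directly. Writing $(\id-E)\circ R=\shl^\star(\id)+\lambda\,(\id-E)\circ\chl^\star(\id)$, the cross term is $2\lambda\,\langle\shl^\star(\id),(\id-E)\circ\chl^\star(\id)\rangle$, and this vanishes: by Lemma~\ref{lemma:rev}(5) one has $\langle\shl^\star(\id),E\circ\chl^\star(\id)\rangle=\langle E\circ\shl^\star(\id),E\circ\chl^\star(\id)\rangle=0$ (immediate from $E\circ\shl^\star(\id)=0$, or Lemma~\ref{lemma:rev}(7)), while $\langle\shl^\star(\id),\chl^\star(\id)\rangle=0$ by Lemma~\ref{lemma:rev}(3); subtracting gives the claim. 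Hence
\[
\|(\id-E)\circ R\|^2=\|\shl^\star(\id)\|^2+\lambda^2\,\|(\id-E)\circ\chl^\star(\id)\|^2,
\]
a quadratic in $\lambda$ minimized uniquely at $\lambda=0$, with value $\|\shl^\star(\id)\|^2$; the stochastic Taylor value $\|\shl^\star(\id)\|^2+\|(\id-E)\circ\chl^\star(\id)\|^2$ is recovered at $\lambda=1$. This exhibits the sinhlog integrator as the optimal one and quantifies its improvement over stochastic Taylor as $\|(\id-E)\circ\chl^\star(\id)\|^2$.

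The step I expect to be the main obstacle is the reduction in the first paragraph: proving that the leading remainders of the class are genuinely exhausted by the pencil $\shl^\star(\id)+\lambda\,\chl^\star(\id)$, equivalently that the coshlog direction is the only one along which the error can be lowered. This is where the restriction to $n$ odd is essential, and it is exactly what the parity structure buys: the even (coshlog) sector is removable because it carries the full expectation, while the odd (sinhlog) sector has vanishing expectation and is irreducible. I would make this precise by splitting a general remainder into its $|S|$-even and $|S|$-odd parts, using the $|S|$-invariance of the inner product (Lemma~\ref{lemma:rev}(1)) to orthogonalise the two sectors, and then arguing that sinhlog removes the entire even sector while no admissible integrator can reduce the odd sector below $\shl^\star(\id)$. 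The analogous statement fails for $n$ even precisely because then $E\circ\shl^\star(\id)\neq0$, so the clean orthogonal splitting — and with it the optimality — breaks down.
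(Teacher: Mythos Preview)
Your approach diverges from the paper's, and the gap you flag in your last paragraph is real and not closable along the lines you sketch. The paper does not try to parametrise all admissible remainders by a one-dimensional pencil. It works order by order within the $J$-power-series subalgebra: having fixed the coefficients of $J,\ldots,J^{\star n}$ at their sinhlog values, it perturbs only the top coefficient, setting $\shl^\star_\epsilon(\id)=\shl^\star(\id)+\epsilon\,J^{\star(n+1)}$. Expanding $\|\id\|^2$ on $\Sb_{n+1}$ via $\id=Q_\epsilon+\chl^\star(\id)-\epsilon J^{\star(n+1)}$ and the sinhlog/coshlog orthogonality gives the mean-square excess as a concave quadratic in $\epsilon$ whose linear coefficient is $\bigl\langle(\id-E)\circ(\id-S),(\id-E)\circ J^{\star(n+1)}\bigr\rangle$. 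For $n$ odd this vanishes by Lemma~\ref{lemma:rev}(8) (since $S=|S|$ on $\Sb_{n+1}$), so the excess is maximal at $\epsilon=0$.

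Your pencil $\shl^\star(\id)+\lambda\,\chl^\star(\id)$ is a \emph{different} one-parameter slice, transversal to the paper's $J^{\star(n+1)}$ direction once $n\geqslant 2$, so optimality along your pencil is a separate and weaker statement. Two further issues undermine the proposed reduction. First, $R=Q\circ\ind_{\Sb_{n+1}}$ is not valid for a general grade-preserving $f$: it holds for sinhlog because $P(\un)=0$, but Step~2 of Theorem~\ref{theorem:main} shows $R=\tfrac{2}{1+\epsilon}\,Q\circ\ind_{\Sb_{n+1}}$ for the family there, so the scalar linking $R$ to $Q$ is $f$-dependent and must be tracked. Second, the $|S|$-odd sector on $\Sb_{n+1}$ is not one-dimensional, and you offer no argument that no admissible integrator can push its contribution below $\|\shl^\star(\id)\|$; that is precisely the hard direction, and the $|S|$-splitting alone does not supply it. Your identity $E\circ\shl^\star(\id)=0$ on $\Sb_{n+1}$ for $n$ odd is correct and pleasant, but it is not the mechanism the paper uses; the key input there is Lemma~\ref{lemma:rev}(8).
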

\begin{proof}
To prove this, order by order we 
perturb the sinhlog integrator as follows. For a fixed $n\in\mathbb N$ 
perturb the coefficient of $J^{\star(n+1)}$ in the sinhlog expansion so that  
\begin{equation*}
\shl^\star_\epsilon(\id)=J+\tfrac12\sum_{k=2}^\infty(-1)^{k+1}J^{\star k}
+\epsilon J^{\star(n+1)},
\end{equation*}
where $\epsilon\in\mathbb R$ is a parameter. Then repeating the procedure
above, we have
\begin{equation*}
Q_\epsilon
=\bigl(\shl^\star(\id)+\epsilon J^{\star(n+1)}\bigr)\circ\ind_{\Sb_{\geqslant n+1}}.
\end{equation*}
Since the leading order behaviour of the inverse of $\shl^\star$ is 
unaffected, we have at leading order $R_\epsilon=Q_\epsilon$. As above,
truncate $Q_\epsilon$ with $\ind_{\Sb_{n+1}}$. Then with $\Sb=\Sb_{n+1}$ 
we see 
\begin{align*}
\|\id\|^2
=&\;\bigl\langle Q_\epsilon+\chl^\star(\id)-\epsilon J^{\star(n+1)}, 
Q_\epsilon+\chl^\star(\id)-\epsilon J^{\star(n+1)}\bigr\rangle\\
=&\;\|Q_\epsilon\|^2
+2\bigl\langle Q_\epsilon,\chl^\star(\id)-\epsilon J^{\star(n+1)}\bigr\rangle
+\bigl\|\chl^\star(\id)-\epsilon J^{\star(n+1)}\bigr\|^2\\
=&\;\|Q_\epsilon\|^2+\|\chl^\star(\id)\|^2
-2\epsilon\bigl\langle\shl^\star(\id),J^{\star(n+1)}\bigr\rangle
-\epsilon^2\bigl\|J^{\star(n+1)}\bigr\|^2\\
=&\;\|Q_\epsilon\|^2+\|\chl^\star(\id)\|^2
-\epsilon\bigl\langle\id-S,J^{\star(n+1)}\bigr\rangle
-\epsilon^2\bigl\|J^{\star(n+1)}\bigr\|^2,
\end{align*}
where in the penultimate step
we used the orthogonality property of sinhlog and coshlog. 
When we include the expectations of the leading order terms 
in the remainder (cf.\/ Remark~\ref{remark:expect}) this
relation becomes
\begin{align*}
\bigl\|(\id-E)\circ\id\bigr\|^2
=&\;\bigl\|(\id-E)\circ Q_\epsilon\bigr\|^2+\bigl\|(\id-E)\circ\chl^\star(\id)\bigr\|^2\\
&\;-\epsilon\bigl\langle(\id-E)\circ(\id-S),(\id-E)\circ J^{\star(n+1)}\bigr\rangle\\
&\;-\epsilon^2\bigl\|(\id-E)\circ J^{\star(n+1)}\bigr\|^2.
\end{align*}
The linear term in $\epsilon$ is zero 
if $n$ is odd, by Lemma~\ref{lemma:rev}(8). Hence in this case, 
the mean-square excess, the terms on the right other than $\|Q_\epsilon\|^2$, is
optimized when $\epsilon=0$. 
\end{proof}
\begin{remark}\label{rmk:proviso}
This result can be found in Malham \& Wiese (2009) for $\Ab=\{1,\ldots,d\}$.
It extends to $\Ab=\{0,1,\ldots,d\}$ using Lemma~\ref{lemma:rev},
though under the proviso that we grade
according to word length. This means that we endeavour to include
some multiple integrals in our stochastic integrator involving the 
drift `0' index that we would not ordinarily include if we were 
grading according to variance (of the multiple Wiener integrals).
However we take the perspective here that there are
far fewer of these terms at each grading according to length,
and the computational effort associated with their simulation
is a small fraction of that overall. If we include them, 
we guarantee efficiency. See \S\ref{sec:conclu} for an example.
Of course if $\Ab=\{1,\ldots,d\}$ only, then the two notions of grading
coincide and this technicality is redundant.
\end{remark}
Some natural questions now arise. It is apparent that
the first key result in the argument above to prove 
efficiency was 
Lemma~\ref{lemma:rev}(4) showing that
the norm-square of the identity endomorphism 
decomposes into the sum of the norm-squares
of the sinhlog and coshlog endomorphisms. The 
second key result, to prove that the sinhlog integrator was
optimal when $n$ is odd, was Lemma~\ref{lemma:rev}(8).
First, with regard to efficiency and 
Lemma~\ref{lemma:rev}(4). Since there
is no immediate apparent difference, it would seem
we could choose coshlog as our integrator endomorphism.
Further, the result of Lemma~\ref{lemma:rev}(4) essentially
relied on the fact that $\la S, S\ra=\la\id,\id\ra$. However,
we also know from Lemma~\ref{lemma:rev}(2) that $\la|S|,|S|\ra=\la\id,\id\ra$.
Thus, in principle, we could also consider $\frac{1}{2}(\id-|S|)$
as an integrator (or indeed any pair $\frac{1}{2}(\id\pm X)$
for which $\la X, X\ra=\la\id,\id\ra$). 
Note that $\frac{1}{2}(\id-|S|)$ is the equivalent of
applying the sinhlog endomorphism at even orders and 
the coshlog endomorphism at odd orders. Second, with regard to 
optimal efficiency, it would also seem that 
$\frac{1}{2}(\id-|S|)$ would deliver optimality at both
even and odd orders as the linear term in $\epsilon$ above
would be zero. However, this is not the case.
The key is the relation between the error $R$
and the pre-remainder $Q$. 

We define the following class of endomorphisms, set
\begin{equation*}
f^\star(X;\epsilon)\coloneqq \tfrac12(X-\epsilon X^{\star(-1)}).
\end{equation*}
for any $\epsilon\in\mathbb R$. Then we see 
$f^\star(\id,+1)$ is the sinhlog endomorphism and
$f^\star(\id,-1)$ is the coshlog endomorphism.
Note that the compositional inverse of $f^\star(X;\epsilon)$ is 
\begin{equation*}
f^{-1}(X;\epsilon)=X+h^\star(X,\epsilon\,\nu),
\end{equation*}
where $h^\star=h^\star(X,Y)$ is the convolutional
square root of $X^{\star 2}+Y$ given in the introduction.
Our main result is as follows.

\begin{theorem}\label{theorem:main}
For every $\epsilon>0$ the class of integrators 
$f^\star(\id;\epsilon)$ is \emph{efficient}.
When $\epsilon=1$, the error of the integrator $f^\star(\id;1)$
realizes its smallest possible value compared to
the error of the corresponding stochastic Taylor integrator,
i.e.\/ if $R_\epsilon$ denotes the remainder of the integrator,
we have that the mean-square excess
\begin{equation*}
\|(\id-E)\circ\id\|^2-\|(\id-E)\circ R_\epsilon\|^2
\end{equation*}
is positive and maximized at $\epsilon=1$.
Thus a strong stochastic integrator based on the sinhlog
endomorphism is \emph{optimally efficient} 
within this class.
\end{theorem}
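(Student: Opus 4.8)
The plan is to reduce the whole one-parameter family to the sinhlog/coshlog pair. Since $\id=\shl^\star(\id)+\chl^\star(\id)$ and $S=\chl^\star(\id)-\shl^\star(\id)$, the defining relation $f^\star(\id;\epsilon)=\tfrac12(\id-\epsilon S)$ rearranges into the clean decomposition
\[
f^\star(\id;\epsilon)=\tfrac{1+\epsilon}{2}\,\shl^\star(\id)+\tfrac{1-\epsilon}{2}\,\chl^\star(\id).
\]
Writing $P_\epsilon\coloneqq\ind_{\Sb_{\leqslant n}}\circ f^\star(\id;\epsilon)$ and $Q_\epsilon\coloneqq f^\star(\id;\epsilon)-P_\epsilon$, and using that every endomorphism here is grade preserving, the leading grade-$(n+1)$ part of the pre-remainder is $Q_\epsilon\circ\ind_{\Sb_{n+1}}=\bigl(\tfrac{1+\epsilon}{2}\shl^\star(\id)+\tfrac{1-\epsilon}{2}\chl^\star(\id)\bigr)\circ\ind_{\Sb_{n+1}}$.

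The decisive step --- precisely the ``relation between $R$ and $Q$'' flagged above --- is that $R_\epsilon\neq Q_\epsilon$ at leading order as soon as $\epsilon\neq1$. Proceeding as in the sinhlog derivation I would write
\[
R_\epsilon=\id-f^{-1}(P_\epsilon;\epsilon)=Q_\epsilon+\delta,\qquad \delta\coloneqq h^\star(P_\epsilon+Q_\epsilon,\epsilon\nu)-h^\star(P_\epsilon,\epsilon\nu),
\]
and then exploit the key fact that $P_\epsilon$ now carries a nonzero grade-zero component $\tfrac{1-\epsilon}{2}\,\nu$, since $\chl^\star(\id)$ fixes the empty word whereas $\shl^\star(\id)$ annihilates it. Because $\nu$ is the convolution unit, this component makes $P_\epsilon\star Q_\epsilon$ and $Q_\epsilon\star P_\epsilon$ feed back into grade $n+1$, each contributing $\tfrac{1-\epsilon}{2}Q_\epsilon$ there, while all other pieces --- the convolutions involving the grade-$\geqslant1$ part of $P_\epsilon$, together with $Q_\epsilon^{\star2}$ and $\delta^{\star2}$ --- land in grade $\geqslant n+2$. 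Comparing $h^\star(P_\epsilon+Q_\epsilon,\epsilon\nu)^{\star2}=(P_\epsilon+Q_\epsilon)^{\star2}+\epsilon\nu$ with $h^\star(P_\epsilon,\epsilon\nu)^{\star2}=P_\epsilon^{\star2}+\epsilon\nu$, and using $h^\star(P_\epsilon,\epsilon\nu)(\un)=\tfrac{1+\epsilon}{2}\un$, the grade-$(n+1)$ terms yield the exact linear relation $(1+\epsilon)\,\delta=(1-\epsilon)\,Q_\epsilon$, so that
\[
R_\epsilon\circ\ind_{\Sb_{n+1}}=\tfrac{2}{1+\epsilon}\,Q_\epsilon\circ\ind_{\Sb_{n+1}}=\Bigl(\shl^\star(\id)+\tfrac{1-\epsilon}{1+\epsilon}\,\chl^\star(\id)\Bigr)\circ\ind_{\Sb_{n+1}},
\]
which collapses correctly to $R_1=\shl^\star(\id)\circ\ind_{\Sb_{n+1}}$ at $\epsilon=1$.

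With this leading remainder the remaining work is the orthogonal decomposition already prepared in Lemma~\ref{lemma:rev}. Setting $\beta\coloneqq\tfrac{1-\epsilon}{1+\epsilon}$, I would first check that $(\id-E)\circ\shl^\star(\id)$ and $(\id-E)\circ\chl^\star(\id)$ are orthogonal on $\Sb_{n+1}$: expanding their inner product and applying Lemma~\ref{lemma:rev}(5) collapses the four terms to $\langle\shl^\star(\id),\chl^\star(\id)\rangle-\langle E\circ\shl^\star(\id),E\circ\chl^\star(\id)\rangle$, which vanishes by Lemma~\ref{lemma:rev}(3) and (7). The identical computation applied to $\id=\shl^\star(\id)+\chl^\star(\id)$ and to $R_\epsilon=\shl^\star(\id)+\beta\,\chl^\star(\id)$ then gives
\[
\|(\id-E)\circ\id\|^2=\|(\id-E)\circ\shl^\star(\id)\|^2+\|(\id-E)\circ\chl^\star(\id)\|^2
\]
and $\|(\id-E)\circ R_\epsilon\|^2=\|(\id-E)\circ\shl^\star(\id)\|^2+\beta^2\|(\id-E)\circ\chl^\star(\id)\|^2$, all norms taken with $\Sb=\Sb_{n+1}$.

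Subtracting, the mean-square excess telescopes to
\[
\|(\id-E)\circ\id\|^2-\|(\id-E)\circ R_\epsilon\|^2=(1-\beta^2)\,\|(\id-E)\circ\chl^\star(\id)\|^2=\frac{4\epsilon}{(1+\epsilon)^2}\,\|(\id-E)\circ\chl^\star(\id)\|^2.
\]
The prefactor $4\epsilon/(1+\epsilon)^2$ is strictly positive for every $\epsilon>0$ and independent of $\mathsf V$, which gives efficiency throughout the class; and since its derivative $4(1-\epsilon)/(1+\epsilon)^3$ vanishes only at $\epsilon=1$, where the prefactor attains its maximal value $1$, the excess is maximized exactly at the sinhlog integrator, proving optimal efficiency. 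I expect the main obstacle to be the second paragraph: isolating the grade-$(n+1)$ part of $R_\epsilon$ and, above all, not overlooking the grade-zero component of $P_\epsilon$ that distinguishes $\epsilon\neq1$ from sinhlog. It is exactly the resulting factor $2/(1+\epsilon)$ --- in place of the naive $R_\epsilon=Q_\epsilon$ --- that turns an otherwise $\epsilon$-unbounded excess into the bounded quantity above, peaked at $\epsilon=1$.
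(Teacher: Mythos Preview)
Your proof is correct and reaches the same leading-order relation $R_\epsilon\circ\ind_{\Sb_{n+1}}=\tfrac{2}{1+\epsilon}\,Q_\epsilon\circ\ind_{\Sb_{n+1}}$ as the paper, but your route differs in two notable respects. First, to extract this factor the paper expands $h^\star(P+Q,\epsilon\nu)-h^\star(P,\epsilon\nu)$ as a convolution power series and tracks the leading coefficient of $(P^{\star2}+\epsilon\nu)^{\star(-1/2)}$; you instead square both $h^\star$-values and equate grade-$(n+1)$ parts, which is shorter and avoids the formal square-root expansion. Second, for the excess the paper writes $\id=R+(\id-R)$, expands in terms of $\id$ and $S$, and arrives at
\[
\frac{2\epsilon}{(1+\epsilon)^2}\Bigl(\|(\id-E)\circ\id\|^2+\langle(\id-E)\circ\id,(\id-E)\circ S\rangle\Bigr),
\]
then invokes Cauchy--Schwarz together with $\|(\id-E)\circ S\|=\|(\id-E)\circ\id\|$ to deduce non-negativity. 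Your sinhlog/coshlog decomposition and the orthogonality of their $(\id-E)$-projections yield the same quantity directly as $\tfrac{4\epsilon}{(1+\epsilon)^2}\|(\id-E)\circ\chl^\star(\id)\|^2$, which is manifestly a non-negative norm-square times a scalar factor---no Cauchy--Schwarz needed. The two expressions agree since $\|(\id-E)\circ\chl^\star(\id)\|^2=\tfrac12\bigl(\|(\id-E)\circ\id\|^2+\langle(\id-E)\circ\id,(\id-E)\circ S\rangle\bigr)$ on $\Sb_{n+1}$. Your argument is thus a streamlined variant of the paper's: same skeleton, cleaner bookkeeping.
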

\begin{proof}
We prove the theorem in four steps.
Note that we have for any $\epsilon\in\mathbb R$:
\begin{equation*}
f^\star(\id;\epsilon)=\tfrac12(1-\epsilon)\nu+\tfrac12(1+\epsilon)J
-\tfrac12\epsilon\bigl(J^{\star 2}-J^{\star 3}+\cdots\bigr).
\end{equation*}
The integrator of interest of strong order $n/2$ is
$P\coloneqq\ind_{\Sb_{\leqslant n}}\circ f^\star(\id;\epsilon)$ 
and given by
\begin{equation*}
P=\Bigl(\tfrac12(1-\epsilon)\nu+\tfrac12(1+\epsilon)J
-\tfrac12\epsilon\bigl(J^{\star 2}-J^{\star 3}
+\cdots+(-1)^nJ^{\star n}\bigr)\Bigr)\circ\ind_{\Sb_{\leqslant n}}.
\end{equation*}
The pre-remainder is 
$Q=f^\star(\id;\epsilon)\circ\ind_{\Sb_{\geqslant n+1}}$.
Further, the approximation 
$\widehat{\id}\coloneqq f^{-1}\circ P$
results in an error given by 
\begin{align*}
R=&\;\id-\widehat{\id}\\
=&\;f^{-1}\circ (P+Q)-f^{-1}\circ P\\
=&\;Q+h^\star(P+Q,\epsilon\,\nu)-h^\star(P,\epsilon\,\nu).
\end{align*}
Our goal now is to carefully examine this relationship between
$R$ and $Q$, and in particular, the difference on the right shown.
\smallskip

\emph{Step~1.} 
A thorough understanding of the function $h^\star$ is thus required,
and we motivate our analysis by considering the corresponding 
real-valued function $h\colon\mathbb R^2\to\mathbb R$ given by
\begin{equation*}
h\colon(x,y)\mapsto(x^2+y)^{1/2}.
\end{equation*}
The function $h=h(x,y)$ is real-analytic for $y>-x^2$, 
zero on $y=-x^2$ where its tangent plane is orthogonal
to the $(x,y)$-plane, and complex-valued elsewhere. 
Hence we can choose a point inside the open region $y>-x^2$
about which the Taylor series for $h=h(x,y)$
has a non-zero radius of convergence. In particular
for $y>-x^2$, the difference $h(x+q,y)-h(x,y)$ at 
leading order in $q$ is given by
\begin{equation*}
h(x+q,y)-h(x,y)=\frac{x}{(x^2+y)^{1/2}}\cdot q+\cdots.
\end{equation*}
\smallskip

\emph{Step~2.} By analogy with Step~1 we see in the 
convolution algebra we can expand
\begin{align*}
R=&\;Q+h^\star(P+Q,\epsilon\,\nu)-h^\star(P,\epsilon\,\nu)\\
=&\;Q+\bigl((P+Q)^{\star2}+\epsilon\,\nu\bigr)^{\star(1/2)}-
\bigl(P^{\star2}+\epsilon\,\nu\bigr)^{\star(1/2)}\\
=&\;Q+\bigl(P^{\star2}+\epsilon\,\nu\bigr)^{\star(1/2)}\\
&\;\qquad\star\biggl(\Bigl(\nu+\bigl(P^{\star2}+\epsilon\,\nu\bigr)^{\star(-1)}
\star\bigl(P\star Q+Q\star P+Q^{\star2}\bigr)\Bigr)^{\star(1/2)}-\nu\biggr)\\
=&\;Q+\tfrac12(P^{\star2}+\epsilon\,\nu)^{\star(-1/2)}\star\Bigl(\bigl(P\star Q+Q\star P\bigr)
+\mathcal O\bigl(Q^{\star2}\bigr)\Bigr).
\end{align*}
Further we note that at leading order 
\begin{equation*}
P=\tfrac12(1-\epsilon)\,\nu+\tfrac12(1+\epsilon)\,J+\mathcal O(J^{\star2}),
\end{equation*}
and thus for all $\epsilon>-1$ we have
\begin{equation*}
(P^{\star2}+\epsilon\,\nu)^{\star(-1/2)}=\bigl(\tfrac12(1+\epsilon)\bigr)^{-1}\,\nu+\mathcal O(J).
\end{equation*}
Substituting these expressions into that for $R$ above, we get at leading order:
\begin{align*}
R=&\;Q+\frac{1-\epsilon}{1+\epsilon}Q+\mathcal O(J\star Q)\\
=&\;\frac{2}{1+\epsilon}Q\circ\ind_{\Sb_{n+1}}.
\end{align*}
\smallskip

\emph{Step~3.} Let us now compare the mean-square error of $P$ to
the corresponding stochastic Taylor integrator $\id\circ\ind_{\Sb_{\leqslant n}}$.
We see that on $\Sb_{n+1}$ we have
{\allowdisplaybreaks
\begin{align*}
\|\id\|^2=&\;\la R+\id-R, R+\id-R\ra\\
=&\;\|R\|^2+2\la R,\id-R\ra+\la\id-R,\id-R\ra\\
=&\; \|R\|^2+2\biggl\langle\id-\frac{1}{1+\epsilon}(\id-\epsilon S),
\frac{1}{1+\epsilon}(\id-\epsilon S)\biggr\rangle\\
&\;\qquad\,\,+\biggl\langle\id-\frac{1}{1+\epsilon}(\id-\epsilon S),
\id-\frac{1}{1+\epsilon}(\id-\epsilon S)\biggr\rangle\\
=&\;\|R\|^2+\|\id\|^2-\frac{1}{(1+\epsilon)^2}\bigl(\|\id\|^2
-2\epsilon\la\id,S\ra+\epsilon^2\|S\|^2\bigr)\\
=&\;\|R\|^2+\frac{2\epsilon}{(1+\epsilon)^2}\bigl(\|\id\|^2+\la\id,S\ra\bigr).
\end{align*}}
If we include the expectations of the leading order terms 
in the integrator (again cf.\/ Remark~\ref{remark:expect}) 
then we have 
\begin{equation*}
(\id-E)\circ R=\frac{2}{1+\epsilon}(\id-E)\circ Q\circ\ind_{\Sb_{n+1}}.
\end{equation*}
The comparison above becomes 
\begin{multline*}
\|(\id-E)\circ\id\|^2=\|(\id-E)\circ R\|^2\\
+\frac{2\epsilon}{(1+\epsilon)^2}
\Bigl(\bigl\|(\id-E)\circ\id\bigr\|^2+\bigl\langle(\id-E)\circ\id,(\id-E)\circ S\bigr\rangle\Bigr).
\end{multline*}
\smallskip

\emph{Step 4.} We see the mean-square excess is 
\begin{equation*}
\frac{2\epsilon}{(1+\epsilon)^2}
\Bigl(\bigl\|(\id-E)\circ\id\bigr\|^2
+\bigl\langle(\id-E)\circ\id,(\id-E)\circ S\bigr\rangle\Bigr).
\end{equation*}
Note for any $X,Y\in\Hb$ we have
$\bigl|\la X,Y\ra\bigr|\leqslant\|X\|\,\|Y\|$.
Using Lemma~\ref{lemma:rev} we see that
\begin{align*}
\bigl\|(\id-E)\circ|S|\bigr\|^2
&=\bigl\langle |S|,|S|\bigr\rangle-\bigl\langle E\circ|S|,E\circ|S|\bigr\rangle\\
&=\la\id,\id\ra-\la E\circ\id,E\circ\id\ra\\
&=\bigl\|(\id-E)\circ\id\bigr\|^2.
\end{align*}
Hence we observe that
$\bigl\langle(\id-E)\circ\id,(\id-E)\circ|S|\bigr\rangle
\leqslant\bigl\|(\id-E)\circ\id\bigr\|^2$,
and thus 
\begin{equation*}
0<\Bigl|\bigl\langle(\id-E)\circ\id,(\id-E)\circ S\bigr\rangle\Bigr|
\leqslant\bigl\|(\id-E)\circ\id\bigr\|^2.
\end{equation*}
Note all our statements above hold for any $\mathsf V$.
Hence for all $\epsilon>0$, the mean-square excess is positive 
and thus $P$ is a uniformly accurate integrator, thus establishing 
the first statement of the theorem. When $\epsilon=0$
the mean-square excess is zero as expected. For $\epsilon<0$
it is negative. Further we see that for $\epsilon>0$ 
the mean-square excess is largest when $\epsilon=1$,
corresponding to the sinhlog endomorphism. 
We have thus established the second statement of the theorem and the
proof is complete.
\end{proof}
\smallskip

Note when $\epsilon=-1$ the mean-square excess
in Step~4 of the proof above is undefined. 
We can explain this as follows. 
The first equation in Step~2 becomes 
\begin{equation*}
R=Q+\tfrac12(P^{\star2}-\nu)^{\star(-1/2)}\star\bigl(P\star Q+Q\star P\bigr)
+\mathcal O\bigl(Q^{\star2}\bigr).
\end{equation*}
However now the leading order behaviour in $P$ is
$P=\nu+\tfrac12J^{\star2}+\mathcal O\bigl(J^{\star3}\bigr)$.
If we denote $\hat P=P-\nu$ then we see that
\begin{align*}
(P^{\star2}-\nu)^{\star(1/2)}
=&\;\bigl((\nu+\hat P)^{\star2}-\nu\bigr)^{\star(1/2)}\\
=&\;\sqrt{2}\,\hat P^{\star(1/2)}\star\bigl(\nu+\tfrac12\hat P\bigr)^{\star(1/2)}\\
=&\;\sqrt{2}\,\hat P^{\star(1/2)}\star\Bigl(\nu+\tfrac14\hat P
+\mathcal O\bigl(\hat P^{\star 2}\bigr)\Bigr).
\end{align*}
Proceeding formally, we substitute these last two expansions 
into the expression for $R$ above. Retaining leading order terms we find
\begin{equation*}
R=Q+\bigl(J^{\star(-1)}\circ\ind_{\Sb_{\leqslant n}}\bigr)\star Q+\mathcal O\bigl(J\star Q\bigr).
\end{equation*}
In other words, the term in the error $R$ corresponding to the difference
of $h^\star$ evaluated at $P+Q$ and $P$ generates the term 
$\bigl(J^{\star(-1)}\circ\ind_{\Sb_{\leqslant n}}\bigr)\star Q$. The inverse
of $J$ is not an element of the group $\mathcal G$. It does have the formal 
expansion $J^{\star(-1)}=S+S^{\star2}+S^{\star3}+\cdots$, but this 
does not have a finite evaluation on the empty word. Hence on
$\Sb_{n+1}$, the term $\bigl(J^{\star(-1)}\circ\ind_{\Sb_{\leqslant n}}\bigr)\star Q$
is not finite. However it is finite on $\Sb_{n}$---the
inverse contracts the number of deconcatenations---but this
now introduces terms in the remainder of the same order as those we
retain in the integrator, i.e.\/ we have a form of order reduction.

\section{Concluding remarks}\label{sec:conclu}
We established the sinhlog integrator is
optimally efficient when we grade according
to word length. For example, the sinhlog integrator
specified by $\Gc(w)\leqslant 2$ on the computation
interval $[t_n,t_{n+1}]$ is given by
\begin{equation*}
y_{n+1}=\shl^{-1}(\hat\sigma_{n,n+1})\circ y_{n},
\end{equation*}
where
\begin{equation*}
\hat\sigma_{n,n+1}=\sum_{i=0}^dJ_i(t_n,t_{n+1})V_i
+\sum_{i,j=0}^d\tfrac12\bigl(J_{ij}-J_{ji}\bigr)(t_n,t_{n+1})V_{ij}.
\end{equation*}
Whatever the vector fields are, this is guaranteed to be more
accurate than
\begin{equation*}
y_{n+1}=y_{n}+\sum_{i=0}^dJ_i(t_n,t_{n+1})V_i(y_n)
+\sum_{i,j=0}^dJ_{ij}(t_n,t_{n+1})V_{ij}(y_n),
\end{equation*}
which is the corresponding integrator based on the stochastic
Taylor expansion according to the grading $\Gc(w)\leqslant 2$.
(A thorough numerical investigation confirming this conclusion 
in the diffusion-only case is presented in Malham \& Wiese (2009).
Further, Lord \textit{et al.\/ } (2008) demonstrate that high accuracy, 
higher order stochastic integrators can deliver greater accuracy
for a given computational effort, than the Euler--Maruyama scheme,
despite the cost associated with simulating the multiple Wiener integrals
included.) First we note that at this order, $\hat\sigma_{n,n+1}$ 
is also the form of the exponential Lie series for $\Gc(w)\leqslant 2$
(this is not true at any higher orders) which is not
an efficient integrator. However at this order, the two integrators
are distinguished when we compute their inverse endomorphisms
to generate the corresponding approximations.
Second, the inverse of the sinhlog endomorphism is 
$\shl^{-1}(\sigma)=\sigma+(\id+\sigma^2)^{1/2}$. 
If the vector fields are linear, $\hat\sigma_{n,n+1}$ is
a matrix and we can construct $\shl^{-1}(\sigma)$
by computing the matrix square-root. If the 
vector fields are nonlinear we can expand the 
square root to sufficiently high degree terms 
(see Malham \& Wiese 2009, Remark~5.3).
Third, the stochastic Taylor based scheme above
is a modification of the Milstein method where 
we additionally include terms involving $J_{i0}$,
$J_{0i}$ and $J_{00}$ for $i=1,\ldots,d$.

We considered here a comparison under the mean-square measure
of the local errors. How the local errors accumulate to contribute to
the global error was considered in Malham \& Wiese (2009), where
it was shown that the local error comparison transfers to the 
global error.

If we know further structure in the stochastic differential system
concerned, then the class of endomorphisms that are efficient
will widen. For example, if the diffusion vector fields 
commute with themselves, but not with the drift vector field, 
then we can deduce that the exponential Lie series
is (trivially) an efficient integrator, again, under the
proviso we grade according to word length. This is 
because in the remainder at leading order, the largest
terms according to root-mean-square scaling with respect to stepsize,
will involve words with non-zero letters. However these will  
not in fact be present as the diffusion vector fields commute.

Our results have only relied on the symmetry properties of
the expectation map (see Lemma~\ref{lemma:rev}) and that 
it realizes positive values
on words with a scaling according to word length. In particular 
they do not depend on the coefficients explicitly. Hence the 
sinhlog integrator is optimally efficient within
the whole class of possible coefficients. In a slightly
different direction, our result also holds for weak approximations
of the multiple integrals, as long as the $L^2$-norm of (sums of)
integrals is preserved. We assume here, that the error is still
being measured in the mean-square sense.

Though in general the Eulerian idempotent is not efficient, 
it is a natural object in the construction 
of geometric integrators. Recently the Dynkin and Klyachko 
Lie algebra idempotents have proved to be important in the 
context of geometric integration, see Chapoton (2009), 
Patras \& Reutenauer (2002), Munthe--Kaas \& Wright (2007), 
and Lundervold \& Munthe--Kaas (2011a,b). Interesting
questions here besides the extension of their use to stochastic
differential equations is which of these projectors generates
a more accurate and efficient geometric integrator than the other?

\begin{acknowledgements}
All the authors would like to thank the London Mathematical
Society with Scheme 4, as well as the Edinburgh
Mathematical Society, for support for a visit by KEF, AL and HMK
to Heriot--Watt in December 2009 when a large part of the research
herein was initiated. We also thank ICMAT and CSIC for 
financial support during the research 
trimester Combinatorics and Control in Madrid in 2010. During this
research trimester AL received financial support from the NILS
Mobility Project, UCM-EEA Abel Extraordinary Chair 2010.
We thank Steven Gray, Luis Duffaut Espinosa and Domonique Manchon
for lively and useful conversations during the trimester.
AW and SJAM would like to thank the 
Institute for Mathematics and its Applications and
the Centre for Numerical Analysis and Intelligent Software,
respectively, for financial support.
SJAM and AW thank Charles Curry for 
useful conversations. Lastly we are also
grateful to the anonymous referees whose comments
and suggestions helped to significantly improve the original paper.
\end{acknowledgements}

\end{document}